\numberwithin{figure}{section}
\theoremstyle{plain}
\newtheorem{thm}{Theorem}[section]
\theoremstyle{definition}
\newtheorem{defn}{Definition}[section]
\numberwithin{equation}{section}
\theoremstyle{remark}
\title{Curves on a smooth surface with position vectors lie in the tangent plane}
\author[A. A. Shaikh and P. R. Ghosh]{Absos Ali Shaikh$^{*1}$ and Pinaki Ranjan Ghosh$^{2}$}
\address{\noindent\newline  $^1$Department of Mathematics,\newline University of
Burdwan, Golapbag,\newline Burdwan-713104,\newline West Bengal, India}
\email{aask2003@yahoo.co.in, aashaikh@math.buruniv.ac.in}
\address{\noindent\newline  $^2$Department of Mathematics,\newline University of
Burdwan, Golapbag,\newline Burdwan-713104,\newline West Bengal, India}
\email{mailtopinaki94@gmail.com}
\begin{document}

\begin{abstract}
The present paper deals with a study of curves on a smooth surface whose position vector always lies in the tangent plane of the surface and it is proved that such curves remain invariant under isometry of surfaces. It is also shown that length of the position vector, tangential component of the position vector and geodesic curvature of a curve on a surface whose position vector always lies in the tangent plane are invariant under isometry of surfaces.
\end{abstract}
\noindent\footnotetext{ $^*$ Corresponding author.\\
$\mathbf{2010}$\hspace{5pt}Mathematics\; Subject\; Classification: 53A04, 53A05, 53A15.\\ 
{Key words and phrases:  Isometry of surfaces, first fundamental form, second fundamental form, geodesic curvature.} }
\maketitle
\section{Introduction}
The notion of rectifying curve was introduced by Chen \cite{BYC03} as a curve in the Euclidean space such that its position vector always lies in the rectifying plane, and then investigated some properties of such curves. For further properties of rectifying curves, the reader can be consulted \cite{BYC05} and \cite{BYC18}. Again Ilarslan and Nesovic \cite{IN08} studied the rectifying curves in Minkowski space and obtained some of its characterization. 
\par
In \cite{CKI18} Camci et. al associated a frame different from Frenet frame to curves on a surface and deduced some characterization of its position vector. In \cite{PRG18A} and \cite{PRG18B} the present authors studied rectifying and osculating curves and obtained some conditions for the invariancy of such curves under isometry. Also the invariancy of the component of position vector of rectifying and osculating curves along the normal and tangent line to the surface are obtained under isometry of surfaces.
\par 
Motivating by the above studies of curves whose position vectors are confined in some plane, in this paper we have investigated curves on a smooth surface with position vector always lying in the tangent plane of the smooth surface. By using the Gauss equation we have deduced the component of the position vector along the tangent, normal and binormal vector in simple form. By considering isometry between two smooth surfaces it is proved that curves on smooth surface whose position vector lies in the tangent plane are invariant. It is also shown that the length of position vector, tangential component and geodesic curvature of such curves are invariant under isomerty.
\section{Preliminaries}
This section is concerned with some preliminary notions of rectifying curves, osculating curves, isometry of surfaces and geodesic curvature (for details see, \cite{AP01}, \cite{MPDC76 }) which will be needed for the remaining.
\par
At every point of an unit speed parametrized curve $\gamma(s)$ with atleast fourth order continuous derivative, there is an orthonormal frame of three vectors, namely, tangent, normal and binormal vectors. Tangent, normal and binormal vectors are denoted by $\vec{t}$, $\vec{n}$ and $\vec{b}$. They are related by the Serret-Frenet equation given as
\begin{eqnarray}
\nonumber
t'(s)&=&\kappa \ n(s),\\
\nonumber
n'(s) &=& -\kappa \ t(s) + \tau \ b(s),\\
\nonumber
b'(s)&=&-\tau \ n(s),
\end{eqnarray}
where $\kappa$ and $\tau$ are respectively the curvature and torsion of $\gamma(s)$. Rectifying, osculating and normal plane is generated by $\{\vec{t},\vec{b}\}$, $\{\vec{t},\vec{n}\}$ and $\{\vec{n},\vec{b}\}$ respectively. Curves whose position vector contained in rectifying, osculating and normal plane are respectively called rectifying, osculating and normal curves. 
\begin{defn}
Let $S$ and $\bar{S}$ be smooth surfaces immersed in $\mathbb{R}^3$. Then a diffeomorphism $f:S\rightarrow\bar{S}$ is called an isometry if the length of any curve on $S$ is invariant under $f$.
\end{defn}
\begin{defn}
Suppose $\gamma(s)$ is any unit speed parametrized curve on a smooth surface $S$. Then the tangent vector $\gamma'(s)$ and the normal $\vec{N}$ to the surface are mutually orthogonal and also $\gamma''(s)$ and $\gamma'(s)$ are orthogonal. Hence $\gamma''(s)$ is represented by the the linear combination of $\vec{N}\times\gamma'(s)$ and $\vec{N}$ as
\begin{equation*}
\gamma''(s)=\kappa_g\vec{N}\times\gamma'(s)+\kappa_n\vec{N}.
\end{equation*}
Then $\kappa_g$ and $\kappa_n$ are respectively called the geodesic curvature and normal curvature of $\gamma(s)$ on $S$ given by the following:
\begin{eqnarray}
\nonumber
\kappa_g&=&\gamma''\cdot(\vec{N}\times\gamma'),\\
\nonumber
\kappa_n&=&\gamma''\cdot\vec{N}.
\end{eqnarray}
\end{defn}
\section{Curves on a surface whose position vector lies in the tangent plane}
Let $S$ be a smooth surface and $\phi$ be a surface patch at any point $p\in S$. Let $\gamma(s)$ be an unit speed parametrized curve in $\phi$ passing through $p\in S$. Then the tangent space of $S$ at $p$ is generated by two linearly independent vectors $\phi_u$ and $\phi_v$, where $\phi_u=\frac{\partial \phi}{\partial u}$ and $\phi_v=\frac{\partial \phi}{\partial v}$. If $\gamma(s)$ be a curve on $S$ whose position vector lies in $T_{\gamma(s)}S$ then the equation of $\gamma(s)$ is given by
\begin{equation}\label{sc2}
\gamma(s)=\lambda(s)\phi_u+\mu(s)\phi_v,
\end{equation}
where $\lambda(s)$ and $\mu(s)$ are two functions of $s$.\\
Differentiating equation $(\ref{sc2})$ we get
\begin{equation*}
\gamma'(s)=\lambda'\phi_u+\mu'\phi_v+\lambda (u'\phi_{uu}+v'\phi_{uv})+\mu(u'\phi_{uv}+v'\phi_{vv}).
\end{equation*}
The Gauss Equation for the surface patch $\phi$ of $S$ with normal vector $\vec{N}$ is given by
%\[$(Q)$\qquad\qquad\qquad
 %\left\{
%\label{eqn:eqlabel}
%\begin{array}{11}
\begin{equation}\label{sc3}
\begin{cases}
\phi_{uu}&=\Gamma_{11}^1\phi_u+\Gamma_{11}^2\phi_v+L\vec{N},\\
\phi_{uv}&=\Gamma_{12}^1\phi_u+\Gamma_{12}^2\phi_v+M\vec{N},\\
\phi_{vv}&=\Gamma_{22}^1\phi_u+\Gamma_{22}^2\phi_v+N\vec{N},\\
\end{cases}
\end{equation}
%\end{array}
%\right.\]
where $\{L, \ M, \ N\}$ are the coefficients of the second fundamental form of $S$, and the Christoffel symbols $\Gamma_{ij}^k$ are given by 
\begin{eqnarray}
\nonumber
\Gamma_{11}^1=\frac{E_uG-2F_uF+E_vF}{2(EG-F^2)},&& \ \Gamma_{11}^2=\frac{2F_uE-E_vE-E_uF}{2(EG-F^2)},\\
\nonumber
\Gamma_{12}^1=\frac{E_vG-G_uF}{2(EG-F^2)},&& \ \Gamma_{12}^2=\frac{G_uE-E_vF}{2(EG-F^2)},\\
\nonumber
\Gamma_{11}^1=\frac{2F_vG-G_uF-G_uG}{2(EG-F^2)},&& \ \Gamma_{11}^2=\frac{G_vE-2F_vF+G_uF}{2(EG-F^2)},
\end{eqnarray}
where $\{E, \ F, \ G\}$ are the coefficients of the second fundamental form of $S$.
Using equation $(\ref{sc3})$ in equation $(\ref{sc2})$ we get
\begin{eqnarray}
\nonumber
\vec{t}=\gamma'(s)=&&\{\lambda'+u'\lambda\Gamma_{11}^1+(v'\lambda+u'\mu) \Gamma_{12}^1+v'\mu\Gamma_{22}^1\}\phi_u+\{\mu'+u'\lambda\Gamma_{11}^2+(v'\lambda+u'\mu)\Gamma_{12}^2\\
\nonumber
&&+v'\mu\Gamma_{22}^2\}\phi_v+\{u'\lambda L+v'\lambda M+u'\mu M+v'\mu N\}\vec{N},
\end{eqnarray}
i.e., $$\vec{t}=A_1\phi_u+A_2\phi_V+A_3\vec{N},$$ where $A_1$, $A_2$ and $A_3$ are respectively given by 
%\[$(Q)$\qquad\qquad\qquad
 %\left\{
%\label{eqn:eqlabel}
%\begin{array}{11}
\begin{equation}\label{sc4}
\begin{cases}
A_1=&\lambda'+u'\lambda\Gamma_{11}^1+(v'\lambda+u'\mu) \Gamma_{12}^1+v'\mu\Gamma_{22}^1,\\
A_2=&\mu'+u'\lambda\Gamma_{11}^2+(v'\lambda+u'\mu)\Gamma_{12}^2+v'\mu\Gamma_{22}^2,\\
A_3=& u'\lambda L+v'\lambda M+u'\mu M+v'\mu N.
\end{cases}
\end{equation}
%\end{array}
%\right.\]

\par
But the tangent plane is generated by $\phi_u$ and $\phi_v$, hence
\begin{eqnarray}
\nonumber
u'\lambda L+v'\lambda M+u'\mu M+v'\mu N=0,\\
\nonumber\frac{\lambda}{\mu}=-\frac{u'L+v'M}{u'M+v'N}.
\end{eqnarray}
So the tangent vector of $\gamma(s)$ is given by
\begin{equation*}
\vec{t}=A_1\phi_u+A_2\phi_v.
\end{equation*}
Here we consider that the curvature $\kappa$ of $\gamma$ is always positive. The normal vector $\vec{n}$ is given by
\begin{eqnarray}
\nonumber
\vec{n}&=&\frac{1}{\kappa}[A_1'\phi_u+A_2'\phi_v+A_1(\phi_{uu}u'+\phi_{uv}v')+A_2(\phi_{uv}u'+\phi_{vv}v')],\\
\nonumber
&=& \frac{1}{\kappa}\{B_1\phi_u+B_2\phi_v+B_3\vec{N}\},
\end{eqnarray}
where $B_1,\ B_2$ and $ B_3$ are respectively given by
%\[$(Q)$\qquad\qquad\qquad
 %\left\{
%\label{eqn:eqlabel}
%\begin{array}{11}
\begin{equation}\label{sc5}
\begin{cases}
B_1&= A_1'+u'A_1\Gamma_{11}^1+(v'A_1+u'A_2)\Gamma_{12}^1+v'\Gamma_{22}^1A_2,\\
B_2&= A_2'+u'A_1\Gamma_{11}^2+(v'A_1+u'A_2)\Gamma_{12}^2+v'\Gamma_{22}^2A_2,\\
B_3&= u'A_1L+(v'A_1+u'A_2)M+v'A_2N.
\end{cases}
\end{equation}
%\end{array}
%\right.\]
Now the binormal vector $\vec{b}$ of $\gamma(s)$ is given by 
\begin{equation}\label{sc6}
\vec{b}=\frac{1}{\kappa}\{ (A_1B_2-A_2B_1)\vec{N}+A_1B_3(F\phi_u-E\phi_v)+A_2B_3(G\phi_u-F\phi_v)\}.
\end{equation}\\
\begin{thm}
Let $\gamma(s)$ be an unit speed parametrized curve on $S$ whose position vector lies in the tangent plane $T_{\gamma(s)}S$. Then the following statements hold:
\par
$(i)$ The distance function $\rho=\|\gamma\|$ is given by $\rho=\lambda^2E+2\lambda\mu F+\mu^2G$.\\

$(ii)$ The tangential component of the position vector of the curve $\gamma(s)$ is given by $$\langle \vec{t},\gamma \rangle=\lambda A_1E+(\lambda A_2+\mu A_1)F+\mu A_2G.$$ 

$(iii)$ The normal component of the position vector is given by $$\langle \vec{n},\gamma \rangle=\frac{1}{\kappa}(\lambda B_1E+(\lambda B_2+\mu B_1)F+\mu B_2G).$$

$(iv)$ The component of the position vector along binormal vector is given by $$\langle \vec{b},\gamma \rangle = \frac{1}{\kappa}\{A_1B_3\mu( F^2-EG)+A_2B_3\lambda( EG- F^2)\},$$
where $\{A_1,\ A_2,\ A_3\}$ and $\{B_1,\ B_2,\ B_3\}$ are described in equations $(\ref{sc4})$ and $(\ref{sc5})$ respectively.\\
\end{thm}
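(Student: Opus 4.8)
The plan is to compute each of the four quantities by expanding the inner product of the position vector $\gamma=\lambda\phi_u+\mu\phi_v$ (equation $(\ref{sc2})$) against the vectors $\vec t$, $\vec n$, $\vec b$, whose coordinates in the frame $\{\phi_u,\phi_v,\vec N\}$ have already been recorded in $(\ref{sc4})$, $(\ref{sc5})$ and $(\ref{sc6})$. The only two structural facts needed are: (a) since $\gamma$ lies in the tangent plane $T_{\gamma(s)}S$, it is orthogonal to $\vec N$, so every term carrying a factor $\vec N$ is annihilated upon taking an inner product with $\gamma$; and (b) the inner products among the basis vectors are the coefficients of the first fundamental form, $\phi_u\cdot\phi_u=E$, $\phi_u\cdot\phi_v=F$, $\phi_v\cdot\phi_v=G$. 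Thus each pairing reduces to the symmetric bilinear form $(a,b),(c,d)\mapsto acE+(ad+bc)F+bdG$ evaluated on suitable coordinate pairs.

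For $(i)$, I would form $\rho^2=\gamma\cdot\gamma=(\lambda\phi_u+\mu\phi_v)\cdot(\lambda\phi_u+\mu\phi_v)$ and expand using (b), obtaining $\lambda^2E+2\lambda\mu F+\mu^2G$. For $(ii)$, recall from the discussion preceding $(\ref{sc4})$ that the tangency hypothesis forces $A_3=0$, so $\vec t=A_1\phi_u+A_2\phi_v$; pairing with $\gamma$ and applying the bilinear form to $(A_1,A_2)$ and $(\lambda,\mu)$ gives $\lambda A_1E+(\lambda A_2+\mu A_1)F+\mu A_2G$. For $(iii)$, use $\vec n=\frac1\kappa(B_1\phi_u+B_2\phi_v+B_3\vec N)$ from $(\ref{sc5})$; fact (a) removes the $B_3\vec N$ term and the same bilinear form applied to $(B_1,B_2)$ and $(\lambda,\mu)$ yields $\frac1\kappa\big(\lambda B_1E+(\lambda B_2+\mu B_1)F+\mu B_2G\big)$.

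Part $(iv)$ is the only one needing more than a line of algebra. Starting from $(\ref{sc6})$, the term $(A_1B_2-A_2B_1)\vec N$ is orthogonal to $\gamma$ by (a) and drops out, leaving the pairings of $A_1B_3(F\phi_u-E\phi_v)$ and $A_2B_3(G\phi_u-F\phi_v)$ with $\lambda\phi_u+\mu\phi_v$. Using (b) one computes $(F\phi_u-E\phi_v)\cdot\gamma=\lambda(FE-EF)+\mu(F^2-EG)=\mu(F^2-EG)$ and $(G\phi_u-F\phi_v)\cdot\gamma=\lambda(GE-F^2)+\mu(GF-FG)=\lambda(EG-F^2)$; collecting the two contributions gives $\frac1\kappa\{A_1B_3\mu(F^2-EG)+A_2B_3\lambda(EG-F^2)\}$, as claimed.

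The argument requires no new ideas; the only (minor) obstacle is bookkeeping in the cross-term computation of $(iv)$ and the consistent use of the observation that the $\vec N$-components of $\vec t$, $\vec n$, $\vec b$ play no role, precisely because the defining hypothesis places $\gamma$ in the tangent plane. I would present the four computations in the order $(i)$–$(iv)$, each as a short display, citing $(\ref{sc2})$, $(\ref{sc4})$, $(\ref{sc5})$, $(\ref{sc6})$ as needed.
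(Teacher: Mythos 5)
Your proposal is correct and follows essentially the same route as the paper: expand each inner product in the $\{\phi_u,\phi_v,\vec N\}$ frame, drop the $\vec N$-terms since $\gamma$ is tangential, and reduce everything to the first fundamental form coefficients $E,F,G$. Your only deviation is writing $\rho^2=\langle\gamma,\gamma\rangle$ where the paper writes $\rho=\langle\gamma,\gamma\rangle$; your version is actually the consistent one given the definition $\rho=\|\gamma\|$, so this is an improvement rather than a gap.
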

\begin{proof}
Let $\gamma(s)$ be an unit speed parametrized curve on $S$ whose position vector lies in the tangent plane $T_{\gamma(s)}S$ and curvature $\kappa>0$. Then $$\gamma(s)=\lambda\phi_u+\mu\phi_v.$$ Therefore
\begin{eqnarray}
\nonumber
\rho=\langle\gamma,\gamma\rangle&=&\langle\lambda\phi_u+\mu\phi_v,\lambda\phi_u+\mu\phi_v\rangle,\\
\nonumber
&=&\lambda^2\langle\phi_u,\phi_u\rangle+\lambda\mu\langle\phi_u,\phi_v\rangle+\lambda\mu\langle\phi_v,\phi_u\rangle+\mu\langle\phi_v,\phi_v\rangle,\\
\nonumber
&=&\lambda^2E+2\lambda\mu F+\mu^2G.
\end{eqnarray}
Which proves $(i)$.\par
The component of $\gamma$ along the tangent vector is obtained as
\begin{eqnarray}
\nonumber
\langle\vec{t},\gamma\rangle&=&\langle A_1\phi_u+A_2\phi_v,\lambda\phi_u+\mu\phi_v\rangle,\\
\nonumber
&=&\lambda A_1\langle\phi_u,\phi_u\rangle+\lambda A_2\langle\phi_u,\phi_v\rangle+\mu A_1\langle\phi_v,\phi_u\rangle+\mu A_2\langle\phi_v,\phi_v\rangle,\\
\nonumber
&=&\lambda A_1E+(\lambda A_2+\mu A_1)F+\mu A_2G,
\end{eqnarray}
where $A_1,\ A_2,\ A_3$ are given in equation $(\ref{sc4})$. This proves $(ii)$.\par
We also find the component of $\gamma$ along the normal vector, which is given by
\begin{eqnarray}
\nonumber
\langle\vec{n},\gamma\rangle&=&\langle \frac{1}{\kappa}\{B_1\phi_u+B_2\phi_v+B_3\vec{N},\lambda\phi_u+\mu\phi_v\rangle,\\
\nonumber
&=& \frac{1}{\kappa}\{\lambda B_1\langle\phi_u,\phi_u\rangle+\lambda B_2\langle\phi_u,\phi_v\rangle+\mu B_1\langle\phi_v,\phi_u\rangle+\mu B_2\langle\phi_v,\phi_v\rangle\},\\
\nonumber
&=&\frac{1}{\kappa}(\lambda B_1E+(\lambda B_2+\mu B_1)F+\mu B_2G),
\end{eqnarray}
where $B_1,\ B_2,\ B_3$ are given in equation $(\ref{sc5})$. Hence statement $(iii)$ is proved.\par
Again the component of $\gamma$ along the binormal vector is given by
\begin{eqnarray}
\nonumber
\langle\vec{b},\gamma\rangle&=&\langle \frac{1}{\kappa}\{(A_1B_2-A_2B_1)\vec{N}+A_1B_3(F\phi_u-E\phi_v)+A_2B_3(G\phi_u-F\phi_v)\},\lambda\phi_u+\mu\phi_v\rangle,\\
\nonumber
&=&\frac{1}{\kappa}\{A_1B_3(F\lambda\langle\phi_u,\phi_u\rangle+F\mu\langle\phi_u,\phi_v\rangle-E\lambda\langle\phi_u,\phi_v\rangle-E\mu\langle\phi_v,\phi_v\rangle)+A_2B_3(G\lambda\langle\phi_u,\phi_u\rangle\\
\nonumber
&&-F\lambda\langle\phi_v,\phi_u\rangle+G\mu\langle\phi_u,\phi_u\rangle-F\mu\langle\phi_v,\phi_v\rangle)\},\\
\nonumber
&=&\frac{1}{\kappa}\{A_1B_3\mu( F^2-EG)+A_2B_3\lambda( EG- F^2)\},
\end{eqnarray}
where $A_1,\ A_2,\ A_3,\ B_1,\ B_2,\ B_3$ are given in equation $(\ref{sc4})$ and $(\ref{sc5})$. Thus statement $(iv)$ is proved.
\end{proof}
Now suppose that $\bar{S}$ is an another surface isometric to $S$ and $f:S\rightarrow\bar{S}$ is the isometry. Then the curve $f\circ\gamma(s)$ in $\bar{S}$ is expressed as 
\begin{eqnarray}\label{sc7}
\nonumber
\bar{\gamma}(s)&=& f\circ\gamma(s)=f(\lambda\phi_u+\mu\phi_v),\\
\nonumber
&=&\lambda f_*(\phi_u)+\mu f_*(\phi_v),\\
&=&\lambda(\bar{\phi}_u)+\mu(\bar{\phi}_v),
\end{eqnarray}
for some smooth functions $\lambda$ and $\mu$. The position vector of $\bar{\gamma}$ lies in $T_{f(p)}\bar{S}$. Hence the isometry $f$ transforms a curve on a surface with position vector in the tangent plane to the same and $\lambda,\ \mu$ does not change under $f$. Hence
\begin{eqnarray}
\nonumber
\frac{u'L+v'M}{u'M+v'N}=\frac{u'\bar{L}+v'\bar{M}}{u'\bar{M}+v'\bar{N}},\\
\nonumber
u'^2(L\bar{M}-\bar{L}M)+v'^2(M\bar{N}-\bar{M}N)+u'v'(L\bar{N}-\bar{L}N)=0,
\end{eqnarray} 
where $\{L, \ M, \ N\}$ and $\{\bar{L}, \ \bar{M}, \ \bar{N}\}$ are the coefficients of the second fundamental form of $S$ and $\bar{S}$ respectively.
\begin{thm}
Let $f:S\rightarrow\bar{S}$ be an isometry and the position vector of two curves $\gamma(s)$ and $\bar{\gamma}(s)$ lies in $T_pS$ and $T_{f(p)\bar{S}}$ respectively. Then the following statements hold:
\par
$(i)$ The distance function $\rho=\|\gamma\|$ is invariant under the isometry. i.e., $\gamma(s)=\bar{\gamma}(s)$.

$(ii)$ The tangential component of the position vector of the curve $\gamma(s)$ is invariant under the isometry $f$. i.e., $\langle \vec{t},\gamma \rangle=\langle \vec{\bar{t}},\bar{\gamma} \rangle$. 

$(iii)$ 
%The normal component of the position vector is invarient under the isometry $f$. i.e., $\langle \vec{t},\gamma \rangle=\langle \vec{\bar{t}},\bar{\gamma} \rangle$.
%
%$(iv)$ The binormal component of the position vector is also invarient under the isometry $f$. i.e., $\langle \vec{t},\gamma \rangle=\langle \vec{\bar{t}},\bar{\gamma} \rangle$.
%
%$(v)$ 
The geodesic curvature of $\gamma$ is invariant under the isometry $f$.
\end{thm}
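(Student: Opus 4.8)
The plan is to exploit the single fact that an isometry $f:S\to\bar S$ preserves the first fundamental form, so that $E=\bar E$, $F=\bar F$, $G=\bar G$ on corresponding parameter domains, and therefore preserves the Christoffel symbols $\Gamma_{ij}^k$, since these are expressed purely through $E,F,G$ and their first partial derivatives. As already noted just before the statement, the functions $\lambda,\mu$ in $\gamma=\lambda\phi_u+\mu\phi_v$ and the coordinate functions $u(s),v(s)$ of $\gamma$ are also unchanged by $f$, hence so are $\lambda',\mu',u',v'$. The first step is then to observe, reading off $(\ref{sc4})$, that every term building $A_1$ and $A_2$ is invariant, so that $A_1=\bar A_1$ and $A_2=\bar A_2$ as functions of $s$; substituting this into $(\ref{sc5})$, which expresses $B_1,B_2$ through $A_1,A_2,A_1',A_2',u',v'$ and the $\Gamma_{ij}^k$, gives $B_1=\bar B_1$ and $B_2=\bar B_2$ as well.

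Granting this, (i) and (ii) are immediate. For (i), the previous theorem gives $\rho=\lambda^2E+2\lambda\mu F+\mu^2G$ and, applied to $\bar\gamma$, $\bar\rho=\lambda^2\bar E+2\lambda\mu\bar F+\mu^2\bar G$; the right-hand sides agree term by term, so $\rho=\bar\rho$ (so that the first assertion reads $\rho=\bar\rho$). For (ii), the previous theorem gives $\langle\vec{t},\gamma\rangle=\lambda A_1E+(\lambda A_2+\mu A_1)F+\mu A_2G$, and the corresponding formula for $\langle\vec{\bar{t}},\bar\gamma\rangle$ is obtained by replacing $A_1,A_2,E,F,G$ with their barred counterparts; since all of these coincide, $\langle\vec{t},\gamma\rangle=\langle\vec{\bar{t}},\bar\gamma\rangle$.

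For (iii) I would first derive a closed expression for $\kappa_g$ in the present setting. By Definition~2.2, $\kappa_g=\gamma''\cdot(\vec N\times\gamma')$, and here $\gamma'=\vec t=A_1\phi_u+A_2\phi_v$ while $\gamma''=\kappa\vec n=B_1\phi_u+B_2\phi_v+B_3\vec N$. Writing $\vec N=(\phi_u\times\phi_v)/\sqrt{EG-F^2}$ and applying the vector triple product yields $\vec N\times\phi_u=(E\phi_v-F\phi_u)/\sqrt{EG-F^2}$ and $\vec N\times\phi_v=(F\phi_v-G\phi_u)/\sqrt{EG-F^2}$, hence
\[
\vec N\times\gamma'=\frac{1}{\sqrt{EG-F^2}}\bigl[-(A_1F+A_2G)\,\phi_u+(A_1E+A_2F)\,\phi_v\bigr].
\]
Taking the inner product with $\gamma''$ — the $B_3\vec N$ term drops out because $\vec N\perp\phi_u,\phi_v$ — and simplifying with $\langle\phi_u,\phi_u\rangle=E$, $\langle\phi_u,\phi_v\rangle=F$, $\langle\phi_v,\phi_v\rangle=G$ collapses the whole expression to $\kappa_g=(A_1B_2-A_2B_1)\sqrt{EG-F^2}$. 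Since $A_1,A_2,B_1,B_2$ were shown invariant in the first step and $EG-F^2=\bar E\bar G-\bar F^2$, this gives $\kappa_g=\bar\kappa_g$.

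The only real computation is this last reduction of $\gamma''\cdot(\vec N\times\gamma')$ to $(A_1B_2-A_2B_1)\sqrt{EG-F^2}$, and the only place needing care is keeping track of the $\phi_u$- and $\phi_v$-coefficients after the two cross products; once the formula is in hand, all three invariance claims follow instantly from the invariance of $E,F,G$ and of $\lambda,\mu,u,v$. If one wished to bypass the computation entirely, (iii) could also be deduced from the classical fact that geodesic curvature is intrinsic, but the explicit formula keeps parts (i)–(iii) on the same footing and matches the paper's style.
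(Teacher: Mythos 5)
Your proposal is correct and follows essentially the same route as the paper: invariance of $E,F,G$ and their derivatives gives invariance of the Christoffel symbols, hence of $A_1,A_2,B_1,B_2$, and then (i)--(iii) follow from the explicit formulas. The only difference is in part (iii), where you correctly normalize $\vec N=(\phi_u\times\phi_v)/\sqrt{EG-F^2}$ and obtain $\kappa_g=(A_1B_2-A_2B_1)\sqrt{EG-F^2}$, whereas the paper uses the unnormalized $\phi_u\times\phi_v$ and gets an extra factor of $\sqrt{EG-F^2}$; since $EG-F^2$ is itself invariant, both versions yield the same conclusion, and yours is the more accurate formula.
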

\begin{proof}
Let $f:S\rightarrow\bar{S}$ be an isometry. Since $\phi(u,v)$ is a surface patch for the surface $S$ at $p$, hence $\bar{\phi}(u,v)=f\circ\phi(u,v)$ is also a surface patch for $\bar{S}$   . Suppose $\{E,F,G\}$ and $\{\bar{E},\bar{F},\bar{G}\}$ are the coefficients of first fundamental forms of $\phi$ and $\bar{\phi}$ respectively. Then we have
\begin{equation}\label{i1}
\bar{E}=E,\quad \bar{F}=F\quad \text{and}\quad \bar{G}=G.
\end{equation} 
Differentiating first relation of equation $(\ref{i1})$ we get
\begin{equation}\label{i2}
\bar{E}_u=\frac{\partial}{\partial u}(\bar{\phi}_u\cdot\bar{\phi}_u)=\frac{\partial}{\partial u}(\phi_u\cdot\phi_u)=E_u.
\end{equation}
Similarly 
\begin{equation}\label{i3}
\begin{cases}
\bar{F}_u=F_u,\qquad \bar{G}_u=G_u,\qquad \bar{E}_v=E_v,\qquad\\ \bar{F}_v=F_v,\qquad and \qquad \bar{G}_v=G_v.
\end{cases}
\end{equation}\\
Now using $(\ref{i1})$ we have  
\begin{eqnarray}
\nonumber
\rho&=\lambda^2E+2\lambda\mu F+\mu^2G\\
\nonumber
&=\lambda^2\bar{E}+2\lambda\mu \bar{F}+\mu^2\bar{G}\\
\nonumber
&=\bar{\rho}.
\end{eqnarray} 
Hence the statement $(i)$ is proved.\par
Since $\bar{A_1}$ and $\bar{A_2}$ are functions of $\lambda$, $\mu$ and Christoffel symbols, hence by virtue of the equations $(\ref{i1}),\ (\ref{i2})$ and $(\ref{i3})$ we see that $\bar{A_1}$ and $\bar{A_2}$ are invariant under the isometry $f$ . Thus
\begin{eqnarray}
\nonumber
\langle \vec{\bar{t}},\bar{\gamma} \rangle&=&\bar{\lambda A_1}\bar{E}+(\lambda \bar{A_2}+\bar{\mu} \bar{A_1})\bar{F}+\bar{\mu} \bar{A_2}\bar{G},\\
\nonumber
&=& \lambda A_1E+(\lambda A_2+\mu A_1)F+\mu A_2G,\\
\nonumber
&=& \langle\vec{t},\gamma\rangle.
\end{eqnarray}
This proves $(ii)$. \par
%\begin{eqnarray}
%\nonumber
%\langle \vec{\bar{n}},\bar{\gamma} \rangle&=&\frac{1}{\bar{\kappa}}(\lambda \bar{B_1}\bar{E}+(\lambda \bar{B_2}+\mu \bar{B_1})\bar{F}+\mu \bar{B_2}\bar{G}),\\
%\nonumber
%&=& \frac{1}{\kappa}(\lambda B_1E+(\lambda B_2+\mu B_1)F+\mu B_2G),
%\end{eqnarray}
%and similarly
%\begin{eqnarray}
%\nonumber
%\langle \vec{\bar{b}},\bar{\gamma} \rangle&=&\frac{1}{\bar{\kappa}}(\lambda \bar{B_1}\bar{E}+(\lambda \bar{B_2}+\mu \bar{B_1})\bar{F}+\mu \bar{B_2}\bar{G})\\
%\nonumber
%&=& \frac{1}{\bar{\kappa}}(\lambda \bar{B_1}\bar{E}+(\lambda \bar{B_2}+\mu \bar{B_1})\bar{F}+\mu \bar{B_2}\bar{G}).
%\end{eqnarray}
%Statement $(iii)$ and $(iv)$ are proved.
Now 
\begin{eqnarray}
\nonumber
\kappa_g &=& \gamma''\cdot(\vec{N}\times\gamma'),\\
\nonumber
&=& \gamma''\{(\phi_u\times\phi_v)\times(A_1\phi_u+A_2\phi_v)\},\\
\nonumber
&=&\gamma''\cdot\{A_1(E\phi_v-F\phi_u)+A_2(F\phi_v-G\phi_u)\},\\
\nonumber
&=&(B_1\phi_u+B_2\phi_v+B_3\vec{N})\cdot\{A_1(E\phi_v-F\phi_u)+A_2(F\phi_v-G\phi_u)\},\\
\nonumber
&=&B_1A_1(EF-FE)+B_1A_2(F^2-EG)+B_2A_1(EG-F^2)+A_2B_2(FG-GF),\\
\nonumber
&=&(B_1A_2-B_2A_1)(F^2-GE).
\end{eqnarray}
Since $\bar{\gamma}$ is also a curve whose position vector lies in the tangent plane $T_{f(p)}\bar{S}$ of $\bar{S}$, hence $$\bar{\kappa}_g=(\bar{B}_1\bar{A}_2-\bar{B}_2\bar{A}_1)(\bar{F}^2-\bar{G}\bar{E}).$$ 
Since $\bar{B_1}$ and $\bar{B_2}$ are the functions of $\bar{A_1}$, $\bar{A_2}$, $\bar{A_1'}$, $\bar{A_2'}$ and Christoffel symbols, so by using the equations $(\ref{i1}),\ (\ref{i2})$ and $ (\ref{i3})$ we can say that $\bar{B_1}$ and $\bar{B_2}$ are invariant under the isometry $f$. In view of $(\ref{i1})$, the last equation yields $$\bar{\kappa}_g=\kappa_g,$$
which proves $(iii)$.
\end{proof}

\section{acknowledgment}
 The second author greatly acknowledges to The University Grants Commission, Government of India for the award of Junior Research Fellow.


\begin{thebibliography}{3}

%%%%%%%%%%%%%%%%%%%%%%%%%%%%%%\\
\bibitem{AP01}
Pressley, A.,
\textit{Elementary differential geometry}, 
Springer-Verlag, 2001.
%%%%%%%%%%%%%%%%%%%%%%%%%%%%%
\bibitem{MPDC76 }
do Carmo, M. P.,
\textit{Differential geometry of curves and surfaces}, 
Prentice-Hall, Inc, New Jersey, 1976.
%%%%%%%%%%%%%%%%%%%%%%%%%%%%%
\bibitem{BYC03}
Chen, B.-Y.,
\textit{What does the position vector of a space curve always lie in its rectifying plane}?, 
Amer. Math. Monthly, \textbf{110} (2003), 147-152.
%%%%%%%%%%%%%%%%%%%%%%%%%%%%%
\bibitem{PRG18A}
Shaikh, A. A. and Ghosh, P. R.,
\textit{Rectifying curves on a smooth surface immersed in the Euclidean space}, to appear in Indian J. Pure Appl. Math., 2018.
%%%%%%%%%%%%%%%%%%%%%%%%%%%%%
\bibitem{PRG18B}
Shaikh, A. A. and Ghosh, P. R.,
\textit{Rectifying and osculating curves on a smooth surface}, to appear in Indian J. Pure Appl. Math., 2018.
%%%%%%%%%%%%%%%%%%%%%%%%%%%%%
\bibitem{BYC05}
Chen, B.-Y. and Dillen, F.,
\textit{Rectfying curve as centrode and extremal curve.}, 
Bull. Inst. Math. Acad. Sinica, \textbf{33}, no. 2, (2005), 77-90.
%%%%%%%%%%%%%%%%%%%%%%%%%%%%%
\bibitem{IN08}
Ilarslan, K. and Nesovic, E.,
\textit{Some Characterizations of Null, Pseudo Null and Partially Null Rectifying Curves in Minkowski Space-Time}, 
Taiwanese J. Math., \textbf{12}, no. 5, (2008), 1035-1044.
%%%%%%%%%%%%%%%%%%%%%%%%%%%%%
\bibitem{BYC18}
Deshmukh, S., Chen, B.-Y. and Alshammari, S. H.,
\textit{On a rectifying curves in Euclidean 3-space}, 
Turk. J. Math., \textbf{42} (2018), 609-620.
%%%%%%%%%%%%%%%%%%%%%%%%%%%%%
\bibitem{CKI18}
Camci, C., Kula, L. and Ilarslan, K.,
\textit{Characterizations of the position vector of a surface curve in Euclidean 3-space}, 
An. St. Univ. Ovidius Constanta, \textbf{19}, no. 3, (2011), 59-70.
%%%%%%%%%%%%%%%%%%%%%%%%%%%%%
\end{thebibliography}
\end{document}